\newcommand{\cU}{\mathcal{U}}
\newcommand{\X}{\mathcal{X}}
\newtheorem{theorem}{Theorem}
\newtheorem{lemma}[theorem]{Lemma}
\newtheorem{definition}[theorem]{Definition}
\newtheorem{example}[theorem]{Example}
\newcommand{\BIGOP}[1]{\mathop{\mathchoice%
{\raise-0.22em\hbox{\huge $#1$}}%
{\raise-0.05em\hbox{\Large $#1$}}{\hbox{\large $#1$}}{#1}}}
\begin{document}

\title{Constructing Representative Scenarios to Approximate Robust Combinatorial Optimization Problems}

\author{Marc Goerigk\thanks{Email: m.goerigk@lancaster.ac.uk}}
\affil{Department of Management Science, Lancaster University, United Kingdom}

\date{}

\maketitle

\abstract{In robust combinatorial optimization with discrete uncertainty, two general approximation algorithms are frequently used, which are both based on constructing a single scenario representing the whole uncertainty set. In the midpoint method, one optimizes for the average case scenario. In the element-wise worst-case approach, one constructs a scenario by taking the worst case in each component over all scenarios. Both methods are known to be $N$-approximations, where $N$ is the number of scenarios.

In this paper, these results are refined by reconsidering their respective proofs as optimization problems. We present a linear program to construct a representative scenario for the uncertainty set, which guarantees an approximation guarantee that is at least as good as for the previous methods. Incidentally, we show that the element-wise worst-case approach can have an advantage over the midpoint approach if the number of scenarios is large. In numerical experiments on the selection problem we demonstrate that our approach can improve the approximation guarantee of the midpoint approach by around 20\%.}

\textbf{Keywords: } robust optimization; combinatorial optimization; approximation algorithms

\section{Introduction}

We consider combinatorial optimization problems of the general form
\[ \min_{\pmb{x}\in\X} \pmb{c}\pmb{x} \]
where $\pmb{c} \ge \pmb{0}$ is a cost vector, and $\X \subseteq \{0,1\}^n$ is a set of feasible solutions. As real-world problems may suffer from uncertainty, robust counterparts to combinatorial problems have been considered in the literature, see \cite{Aissi2009,kasperski2016robust} for surveys on the topic. The resulting robust (or min-max) optimization problem is then of the form
\[ \min_{\pmb{x}\in\X} \max_{\pmb{c}\in\cU} \pmb{c}\pmb{x} \tag{\textsc{MinMax}}\]
where $\cU$ contains all possible cost vectors $\pmb{c}^1, \ldots, \pmb{c}^N$ against we wish to protect.

As robust combinatorial problems are usually NP-hard, approximation methods have been considered \cite{Aissi2007281}. Two such heuristics stand out in the literature, as they are easy to use and implement, and have been providing the best-known approximation guarantee for a wide range of problems. While this guarantee has been improved for specific problems, they are still the best-known general methods (see \cite{approx}).
Both algorithms are based on constructing a single scenario that represents the whole uncertainty $\cU$. For the midpoint algorithm, we use $\hat{\pmb{c}}$ with $\hat{c}_i = 1/N \sum_{j\in[N]} c^j_i$ for all $i\in[n]$. For the element-wise worst-case algorithm, we set $\overline{\pmb{c}}$ by using $\overline{c}_i = \max_{j\in[N]} c^j_i$. Let us denote by $\pmb{x}(\pmb{c})$ a minimizer for the nominal problem with costs $\pmb{c}$, and set $\hat{\pmb{x}}:=\pmb{x}(\hat{\pmb{c}})$ (the midpoint solution) and $\overline{\pmb{x}}:=\pmb{x}(\overline{\pmb{c}})$ (the element-wise worst-case solution). The following results can be found in \cite{Aissi2009}.

\begin{theorem}\label{th-n1}
The midpoint solution $\hat{\pmb{x}}$ is an $N$-approximation for \textsc{MinMax}.
\end{theorem}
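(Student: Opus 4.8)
The plan is to bound the robust objective of the midpoint solution $\hat{\pmb{x}}$ in terms of the optimal robust value by passing through the midpoint scenario $\hat{\pmb{c}}$, using that $\hat{\pmb{x}}$ is optimal for $\hat{\pmb{c}}$ and that all costs are nonnegative.

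First I would observe that for any fixed solution $\pmb{x}\in\X$, the worst-case cost satisfies $\max_{j\in[N]} \pmb{c}^j\pmb{x} \le \sum_{j\in[N]} \pmb{c}^j\pmb{x} = N\,\hat{\pmb{c}}\pmb{x}$, since every term $\pmb{c}^j\pmb{x}$ is nonnegative and each is at most the sum. Hence $\max_{j\in[N]} \pmb{c}^j\hat{\pmb{x}} \le N\,\hat{\pmb{c}}\hat{\pmb{x}}$. Next I would use optimality of $\hat{\pmb{x}}$ for the midpoint scenario: $\hat{\pmb{c}}\hat{\pmb{x}} \le \hat{\pmb{c}}\pmb{x}^*$, where $\pmb{x}^*$ denotes an optimal solution of \textsc{MinMax}. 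Finally, I would relate $\hat{\pmb{c}}\pmb{x}^*$ back to the robust value: since $\hat{\pmb{c}}\pmb{x}^* = \frac1N\sum_{j\in[N]} \pmb{c}^j\pmb{x}^*$ is an average of the values $\pmb{c}^j\pmb{x}^*$, it is at most the maximum, i.e. $\hat{\pmb{c}}\pmb{x}^* \le \max_{j\in[N]} \pmb{c}^j\pmb{x}^*$, which is exactly the optimal \textsc{MinMax} value. Chaining these three inequalities gives $\max_{j\in[N]} \pmb{c}^j\hat{\pmb{x}} \le N \max_{j\in[N]} \pmb{c}^j\pmb{x}^*$, the claimed $N$-approximation.

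There is no real obstacle here; the only point requiring the hypothesis is the first inequality, where nonnegativity of $\pmb{c}$ (and hence of each $\pmb{c}^j\pmb{x}$) is essential — without it the sum could be smaller than the maximum. I would state the nonnegativity assumption explicitly at that step. The argument is short enough that I would present it inline rather than as a displayed chain, though a single \[ \dots \] display listing the three bounds in sequence would also work cleanly.
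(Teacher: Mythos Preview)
Your argument is correct and is exactly the paper's own proof: the same three-step chain $\max_{i}\pmb{c}^i\hat{\pmb{x}} \le N\hat{\pmb{c}}\hat{\pmb{x}} \le N\hat{\pmb{c}}\pmb{x}^* \le N\max_i \pmb{c}^i\pmb{x}^*$, with your justifications for steps (i) and (ii) (nonnegativity, and average $\le$ max) spelled out where the paper leaves them implicit.
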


\begin{theorem}\label{th-n2}
The element-wise worst-case solution $\overline{\pmb{x}}$ is an $N$-approximation for \textsc{MinMax}.
\end{theorem}

Frequently, problems with ''nice'' structure (such as shortest path, spanning tree, selection, or assignment) have been considered in the literature, where it is possible to solve the nominal problem in polynomial time. In particular, this setting makes it possible to solve both of the above approaches in polynomial time by solving one specific scenario (i.e., finding $\pmb{x}(\hat{\pmb{c}})$ or $\pmb{x}(\overline{\pmb{c}})$). This can then be used, e.g., as part of a branch and bound procedure for the (hard) robust problem.

Recently, data-driven robust optimization approaches have been investigated in the literature (see, e.g., \cite{atmos,bertsimas2018data}). This paper has a similar research outlook by using the available data for better approximation guarantees, instead of ignoring structure that may be present. In a similar spirit, by analyzing the symmetry of an uncertainty set, \cite{conde2012constant} is able to derive improved approximation bounds for the related \textsc{MinMax Regret} problem with compact uncertainty sets.

The contributions of this paper are as follows. By re-examining the proofs for Theorems~\ref{th-n1} and \ref{th-n2}, we present a linear program (LP) to construct a scenario $\pmb{c}'$ that is ''representative'' for the uncertainty set $\cU$. We show that the resulting solution $\pmb{x}(\pmb{c}')$ has an approximation guarantee that is at least as good as the guarantee for $\hat{\pmb{x}}$ and $\overline{\pmb{x}}$. We also compare the midpoint and element-wise worst-case approach in more detail and find that the latter can outperform the former if the number of scenarios is large. In numerical experiments, we compare the quality of upper and lower bounds of our approach with the midpoint method, and demonstrate that it is possible to find considerably smaller a-priori and a-posteriori gaps by solving a simple linear program.

\section{Scenario construction based on the midpoint approach}\label{mainsec}

Let $OPT$ be the optimal objective value of problem \textsc{MinMax}, and let $\pmb{x}^*$ be any optimal solution. We make the following distinctions.

\begin{definition}
Let some scenario $\pmb{c}$ (not necessarily in $\cU$) be given. Then
\[ UB(\pmb{c}) = \max_{i\in[N]} \pmb{c}^i\pmb{x}(\pmb{c}) \]
is an upper bound on $OPT$.
If it is possible to compute a lower bound from $\pmb{c}$, we denote this as $LB(\pmb{c})$, and a bound on the ratio as
\[ r(\pmb{c}) \ge UB(\pmb{c}) / LB(\pmb{c}) \]
We call $r(\pmb{c})$ an \emph{a-priori} bound, if it does not require the computation of $\pmb{x}(\pmb{c})$ to find. Otherwise, we call it an \emph{a-posteriori} bound.
\end{definition}
The reason for this distinction is that calculation of $\pmb{x}$ can be costly, if the nominal problem is not solvable in polynomial time.

As an example, the midpoint method uses $\hat{\pmb{c}} := \frac{1}{N}\sum_{i\in[N]} \pmb{c}^i$. It comes with an a-priori bound that is $N$, but by using $LB(\hat{\pmb{c}}) = \hat{\pmb{c}}\pmb{x}(\hat{\pmb{c}})$, we can calculate a stronger a-posteriori bound.

We now consider the problem of finding a better a-priori bound than $N$. To this end, note that Theorem~\ref{th-n1}  can be proven in the following way.
\begin{proof}[Proof of Theorem~\ref{th-n1}]
\[
UB(\hat{\pmb{c}}) = \max_{i\in[N]} \pmb{c}^i \hat{\pmb{x}} \stackrel{(i)}{\le} N \hat{\pmb{c}} \hat{\pmb{x}} \le N \hat{\pmb{c}} \pmb{x}^* \stackrel{(ii)}{\le} N \max_{i\in[N]} \pmb{c}^i \pmb{x}^* = N\cdot OPT
\]
\end{proof}
To mirror the steps of this proof, let us consider the following optimization problem:
\begin{align}
\min_{t,\pmb{c}}\ &t \label{eq0} \\
\text{s.t. } & \max_{i\in[N]} \pmb{c}^i\pmb{x}(\pmb{c}) \le t\cdot \pmb{c}\pmb{x}(\pmb{c}) \label{eq1}\\
& \pmb{c} \pmb{x}^* \le \max_{i\in[N]} \pmb{c}^i \pmb{x}^*  \label{eq2}
\end{align}
\begin{lemma}\label{lem1}
Let $(t,\pmb{c})$ be a feasible solution to problem (\ref{eq0}--\ref{eq2}). Then, $\pmb{x}(\pmb{c})$ is a $t$-approximation for \textsc{MinMax}.
\end{lemma}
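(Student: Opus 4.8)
The plan is to run the generic scenario $\pmb{c}$ through exactly the chain of inequalities used in the proof of Theorem~\ref{th-n1}, noting that the two places where that proof invoked a special property of the midpoint $\hat{\pmb c}$ are precisely the two constraints~\eqref{eq1} and~\eqref{eq2}. Starting from the upper bound $UB(\pmb c)=\max_{i\in[N]}\pmb c^i\pmb x(\pmb c)$, I would argue
\[
UB(\pmb c)=\max_{i\in[N]}\pmb c^i\pmb x(\pmb c)\le t\,\pmb c\,\pmb x(\pmb c)\le t\,\pmb c\,\pmb x^*\le t\max_{i\in[N]}\pmb c^i\pmb x^* = t\cdot OPT,
\]
where the first inequality is constraint~\eqref{eq1}, the second holds because $\pmb x(\pmb c)$ is a minimizer of $\pmb c\pmb x$ over $\X\ni\pmb x^*$, the third is constraint~\eqref{eq2} multiplied by $t$, and the final equality is the optimality of $\pmb x^*$ for \textsc{MinMax}. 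Since $UB(\pmb c)$ is by definition an upper bound on $OPT$, this yields $OPT\le UB(\pmb c)\le t\cdot OPT$, i.e.\ $\pmb x(\pmb c)$ is a $t$-approximation, which is the claim.

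The one point that is not completely automatic, and which I expect to be the only (minor) obstacle, is that two of these steps — passing from $\pmb c\,\pmb x(\pmb c)\le\pmb c\,\pmb x^*$ to the same inequality scaled by $t$, and multiplying \eqref{eq2} by $t$ — preserve the inequality direction only if $t\ge0$, and the problem does not impose this a priori. I would settle it with a short case distinction based on the sign of $\pmb c\,\pmb x(\pmb c)$: the left-hand side of \eqref{eq1} is $\max_{i\in[N]}\pmb c^i\pmb x(\pmb c)\ge0$ because each $\pmb c^i\ge\pmb 0$ and $\pmb x(\pmb c)\ge\pmb 0$; hence if $\pmb c\,\pmb x(\pmb c)>0$ then \eqref{eq1} forces $t\ge0$ and the displayed chain goes through verbatim, while if $\pmb c\,\pmb x(\pmb c)\le0$ then \eqref{eq1} forces $UB(\pmb c)=0$, and since $OPT\ge0$ the bound $UB(\pmb c)\le t\cdot OPT$ holds trivially. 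Note that feasibility of problem~\eqref{eq0}--\eqref{eq2} itself (e.g.\ that $\pmb c=\hat{\pmb c}$ with $t=N$ is feasible) is not needed for this lemma and can be treated separately.
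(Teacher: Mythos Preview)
Your proof is correct and is exactly the approach the paper takes: its entire proof reads ``Analogous to the proof of Theorem~\ref{th-n1},'' and your displayed chain is precisely that analogy, with constraints~\eqref{eq1} and~\eqref{eq2} replacing steps~$(i)$ and~$(ii)$. Your added sign discussion is more careful than the paper itself, which tacitly works in the regime $t\ge 0$ (indeed $t\ge 1$) and ignores the degenerate case.
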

\begin{proof}
Analogous to the proof of Theorem~\ref{th-n1}.
\end{proof}
Note that Problem~(\ref{eq0}--\ref{eq2}) cannot be solved directly, as both the optimal solution $\pmb{x}^*$ and $\pmb{x}(\pmb{c})$ are unknown. To circumvent these two issues, we use different, sufficient constraints instead.

\begin{lemma}\label{lem2}
Let $\pmb{c}$ fulfil
\begin{equation}
\sum_{j\in S} c^i_j \le t \sum_{j\in S} c_j \quad \forall i\in[N], S\subseteq[n] : |S| = k \label{suf1}
\end{equation}
for some value of $t$, and constant $k$ such that $k\le \sum_{j\in[n]} x_j$ for all $x\in\X$. Then, $(t,\pmb{c})$ also fulfils \eqref{eq1}.
\end{lemma}
\begin{proof}
Let $X = \{j\in[n]: x_j(\pmb{c}) = 1\}$ and $\mathcal{S} = \{S\subseteq[n] : |S| = k, S\subseteq X\}$. Then, the number of sets $S$ in $\mathcal{S}$ containing a specific item $j\in X$ is the same for all $j$. Let $\ell$ be this number. By summing \eqref{suf1} over all $S\in\mathcal{S}$, we find that
\[ \ell \sum_{j\in X} c^i_j \le t \ell \sum_{j\in X} c_j \qquad \forall i\in[N] \]
and the claim follows.
\end{proof}
Note that for constant $k$, it is possible in polynomial time to check if $k\le \sum_{j\in[n]} x_j$ for all $x\in\X$. Also, the set $\mathcal{S}$ contains polynomially many elements. As an example, for $k=1$, Constraint~\eqref{suf1} becomes
\[ c^i_j \le tc_j \qquad \forall i\in[N], j\in[n] \]
and for $k=2$, it becomes
\[ c^i_j + c^i_l \le t(c_j+c_l) \qquad \forall i\in[N], j,l\in[n], j\neq l \]
In general, the constraints for some fixed $k$ also imply the constraints for any larger $k$. This means that the larger the value of $k$, the larger is the set of feasible solutions to our optimization problem, and the better approximation guarantees we can get.

\begin{lemma}\label{lem3}
Let $\pmb{c}$ be in $conv(\cU) = conv\{\pmb{c}^1,\ldots,\pmb{c}^N\}$. Then, $\pmb{c}$ fulfils \eqref{eq2}.
\end{lemma}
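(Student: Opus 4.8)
The plan is to use the definition of $\pmb{c}\in conv(\cU)$ directly. Write $\pmb{c} = \sum_{i\in[N]} \lambda_i \pmb{c}^i$ with $\lambda_i \ge 0$ and $\sum_{i\in[N]} \lambda_i = 1$. The goal is to show $\pmb{c}\pmb{x}^* \le \max_{i\in[N]} \pmb{c}^i \pmb{x}^*$, which is exactly \eqref{eq2}.

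First I would expand $\pmb{c}\pmb{x}^* = \sum_{i\in[N]} \lambda_i (\pmb{c}^i \pmb{x}^*)$, which presents the left-hand side as a convex combination of the $N$ numbers $\pmb{c}^i\pmb{x}^*$. The standard fact that a convex combination of real numbers is at most their maximum then gives $\sum_{i\in[N]} \lambda_i (\pmb{c}^i\pmb{x}^*) \le \left(\sum_{i\in[N]} \lambda_i\right) \max_{i\in[N]} \pmb{c}^i\pmb{x}^* = \max_{i\in[N]} \pmb{c}^i\pmb{x}^*$, using that the weights are nonnegative and sum to $1$. This chain of inequalities is precisely \eqref{eq2}, so the proof is complete.

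There is essentially no obstacle here: the statement is the elementary observation that a linear functional evaluated at a point of a polytope is bounded by its maximum over the vertices. The only thing worth noting is that the argument does not use $\pmb{x}^* \ge \pmb{0}$ or any feasibility of $\pmb{x}^*$ — it holds for an arbitrary vector — so the lemma is really about the geometry of $conv(\cU)$ rather than about the optimization problem. I would keep the write-up to two or three lines.
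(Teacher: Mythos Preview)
Your proposal is correct and matches the paper's own proof essentially line for line: write $\pmb{c}=\sum_{i\in[N]}\lambda_i\pmb{c}^i$ as a convex combination, expand $\pmb{c}\pmb{x}^*$, and bound each term by the maximum. The paper likewise observes that the inequality holds for any $\pmb{x}\in\X$, so your remark that no special property of $\pmb{x}^*$ is needed is in line with the original.
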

\begin{proof}
Let $\pmb{c} = \sum_{i\in[N]} \lambda_i \pmb{c}^i$ with $\sum_{i\in[N]} \lambda_i = 1$ and $\lambda_i \ge 0$ for all $i\in[N]$. Then, for any $\pmb{x}\in\X$,
\[ \pmb{c}\pmb{x} = \sum_{i\in[N]} \lambda_i \pmb{c}^i\pmb{x} \le \sum_{i\in[N]} \lambda_i \max_{j\in[N]}\pmb{c}^j\pmb{x} = \max_{i\in[N]} \pmb{c}^i\pmb{x} \]
\end{proof}
We now consider the following linear program:
\begin{align}
\max\ & t \label{neq0}\\
\text{s.t. } & t \sum_{j\in S} c^i_j \le \sum_{j\in S} c_j & \forall i\in[N], S\subseteq[n] : |S| = k \label{neq1} \\
& \pmb{c} = \sum_{i\in[N]} \lambda_i \pmb{c}^i \label{neq2}\\
& \sum_{i\in[N]} \lambda_i = 1 \label{neq3}\\
& \lambda_i \ge 0 & \forall i\in[N] \label{neq4}
\end{align}
Note that we replaced variable $t$ in Problem~(\ref{eq0}--\ref{eq2}) with $1/t$ to linearize terms.

\begin{theorem}
Let $(t^*,\pmb{c}^*)$ be an optimal solution to Problem~(\ref{neq0}--\ref{neq4}). Then, $\pmb{x}(\pmb{c}^*)$ is a $1/t^*$-approximation for \textsc{MinMax}, and $1/t^* \le N$.
\end{theorem}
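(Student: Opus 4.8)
The plan is to assemble the theorem from the three lemmas already in place, and then separately verify the bound $1/t^* \le N$ by exhibiting a feasible point of the linear program with $t = 1/N$.

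First I would argue that $\pmb{x}(\pmb{c}^*)$ is a $1/t^*$-approximation. Given an optimal solution $(t^*, \pmb{c}^*)$ to Problem~(\ref{neq0}--\ref{neq4}), constraints \eqref{neq2}--\eqref{neq4} say precisely that $\pmb{c}^* \in conv(\cU)$, so by Lemma~\ref{lem3} the scenario $\pmb{c}^*$ satisfies \eqref{eq2}. Constraint \eqref{neq1} is constraint \eqref{suf1} after the substitution $t \mapsto 1/t$ (dividing through by $t^* > 0$), so with the stated assumption on $k$, Lemma~\ref{lem2} gives that $(1/t^*, \pmb{c}^*)$ satisfies \eqref{eq1}. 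Hence $(1/t^*, \pmb{c}^*)$ is feasible for Problem~(\ref{eq0}--\ref{eq2}), and Lemma~\ref{lem1} yields that $\pmb{x}(\pmb{c}^*)$ is a $1/t^*$-approximation for \textsc{MinMax}. One small point to check here is that $t^* > 0$, which is needed to make the $t \mapsto 1/t$ substitution legitimate; this follows because the feasible point constructed next has $t = 1/N > 0$.

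Next I would show $1/t^* \le N$, equivalently $t^* \ge 1/N$, by checking that the midpoint scenario together with $t = 1/N$ is feasible for the LP. Take $\lambda_i = 1/N$ for all $i \in [N]$ and $\pmb{c} = \hat{\pmb{c}} = \frac1N\sum_{i\in[N]}\pmb{c}^i$; then \eqref{neq2}--\eqref{neq4} hold trivially. For \eqref{neq1} with $t = 1/N$, we must verify $\frac1N \sum_{j\in S} c^i_j \le \sum_{j\in S} \hat{c}_j$ for every $i\in[N]$ and every $S$ of size $k$. Since each $\pmb{c}^i \ge \pmb{0}$, we have $\sum_{j\in S}\hat c_j = \frac1N\sum_{\ell\in[N]}\sum_{j\in S}c^\ell_j \ge \frac1N\sum_{j\in S}c^i_j$, because the $i$-th term alone lower-bounds the full sum of nonnegative terms. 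So the point is feasible, $t^*$ is at least the objective value $1/N$ of this point, and therefore $1/t^* \le N$.

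I do not anticipate a serious obstacle; the theorem is essentially a bookkeeping assembly of Lemmas~\ref{lem1}--\ref{lem3} plus the elementary feasibility of the midpoint point. The only things that require a moment of care are the reciprocal substitution between the $t$ of Problem~(\ref{eq0}--\ref{eq2}) and the $t$ of the LP (and the attendant need for $t^* > 0$), and making the nonnegativity of the cost vectors explicit where it is used to bound a single term of a sum by the whole sum.
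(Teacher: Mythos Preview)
Your proposal is correct and follows essentially the same approach as the paper: assemble Lemmas~\ref{lem1}--\ref{lem3} to get feasibility of $(1/t^*,\pmb{c}^*)$ for Problem~(\ref{eq0}--\ref{eq2}), and exhibit $(1/N,\hat{\pmb{c}})$ as a feasible point of the LP to bound $t^*$. You simply spell out in more detail what the paper leaves implicit, namely the verification of \eqref{neq1} for the midpoint via nonnegativity of the $\pmb{c}^i$ and the positivity of $t^*$ needed for the reciprocal substitution.
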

\begin{proof}
By Lemmas~\ref{lem2} and \ref{lem3}, $(1/t^*,\pmb{c}^*)$ is feasible for Problem~(\ref{eq0}--\ref{eq2}). Using Lemma~\ref{lem1}, we therefore find that $\pmb{x}(\pmb{c}^*)$ is a $1/t^*$-approximation for \textsc{MinMax}.

To see that $1/t^* \le N$, note that $(1/N,\hat{\pmb{c}})$ is a feasible solution to Problem~(\ref{neq0}--\ref{neq4}).
\end{proof}

Once a solution $(t^*,\pmb{c}^*)$ has been computed, we have found an a-priori approximation guarantee. If we then compute $\pmb{x}(\pmb{c}^*)$, we can derive a lower bound $\pmb{c}^*\pmb{x}(\pmb{c}^*)$, as $\pmb{c}^*\in conv(\cU)$, and an upper bound by calculating the objective value of $\pmb{x}(\pmb{c}^*)$ for \textsc{MinMax}. This way, a stronger a-posteriori guarantee is found.

\begin{example}
We illustrate our approach using a small selection problem as an example. Given four items, the task is to choose two of them that minimize the worst-case costs over three scenarios. The upper part of Table~\ref{extable} shows the item costs in each scenario.

\begin{table}[htb]
\begin{center}
\begin{tabular}{c|rrrr}
item & 1 & 2 & 3 & 4\\
\hline
$\pmb{c}^1$ & 5 & 5 & 3 & 3 \\
$\pmb{c}^2$ & 3 & 8 & 9 & 7 \\
$\pmb{c}^3$ & 3 & 2 & 1 & 6 \\
\hline
$\hat{\pmb{c}}$ & 3.67 & 5.00 & 4.33 & 5.33 \\
$\pmb{c}'$ & 3.75 & 6.88 & 6.75 & 5.50 \\
$\pmb{c}''$ & 3.00 & 8.00 & 9.00 & 7.00
\end{tabular}
\caption{Example item costs, with midpoint scenario ($\hat{\pmb{c}}$), our LP-based scenario with $k=1$ ($\pmb{c}'$), and with $k=2$ ($\pmb{c}''$).}\label{extable}
\end{center}
\end{table}

The midpoint scenario (i.e., the average in each item) is shown in the row below ($\hat{\pmb{c}}$). An optimal solution for this scenario is to pack items 1 and 3. This means that we have an a-priori approximation ratio of $N=3$, and can calcluate a lower bound $LB(\hat{\pmb{c}}) = \hat{\pmb{c}}\hat{\pmb{x}} = 8$ and an upper bound $UB(\hat{\pmb{c}}) = \max_{i\in[N]}\pmb{c}^i\hat{\pmb{x}} = 12.$ Combining lower and upper bound, we find the stronger a-posteriori bound of $1.50$.

Using our linear program~(\ref{neq0}--\ref{neq4}) with $k=1$, we construct the scenario given in the next row $(\pmb{c}')$ and find an a-priori guarantee of $1.33$. For this scenario, an optimal solution is to take items 1 and 4. Accordingly, we find a lower bound of $9.25$, an upper bound of $10$, and an a-posteriori ratio of $1.08$.

Finally, we also use our LP with $k=2$ to find the scenario $\pmb{c}''$ and an a-priori guarantee of 1. This means that even before we have solved the problem, we already know that the resulting solution will be optimal. Indeed, we find that packing items 1 and 4 gives the optimal solution with objective value 10.
\end{example}

Note that we can also use the linear program~(\ref{neq0}--\ref{neq4}) to strengthen the approximation guarantee of the midpoint scenario $\hat{\pmb{c}}$ without calculating $\hat{\pmb{x}}$, by only keeping $t$ variable.

We conclude this section by introducing an alternative approach to calculate a-posteriori bounds, which cannot be used for a-priori bounds. To this end, note that
\[ \max_{\pmb{c}\in conv(\cU)} \min_{\pmb{x}\in\X} \pmb{c}\pmb{x} \le  \min_{\pmb{x}\in\X} \max_{i\in[n]}  \pmb{c}^i\pmb{x}\]
If the nominal problem can be written as a linear program, it can be dualized to find a compact formulation for the max-min problem. As both $\hat{\pmb{c}}$ and the optimal solution to problem (\ref{neq0}--\ref{neq4}) are in $conv(\cU)$, this approach will result in a lower bound which will be at least as good as the lower bounds of the other two approaches. This may not result in a better ratio beteen upper and lower bound, however. We will test this approach in the experimental section.

\section{On the element-wise worst-case}

We now focus on the element-wise worst-case scenario $\overline{\pmb{c}}$ with $\overline{c}_i = \max_{j\in[N]} c^j_i$. A proof for Theorem~\ref{th-n2} is the following.
\begin{proof}[Proof of Theorem~\ref{th-n2}]
\[ UB(\overline{\pmb{c}}) = \max_{i\in[N]} \pmb{c}^i \overline{\pmb{x}} \stackrel{(i)}{\le} \overline{\pmb{c}}\overline{\pmb{x}} \le \overline{\pmb{c}}\pmb{x}^* \stackrel{(ii)}{\le} N \max_{i\in[N]} \pmb{c}^i\pmb{x}^* = N \cdot OPT\]
\end{proof}
Accordingly, we can generalize this proof to an optimization problem by writing
\begin{align}
\min_{t,\pmb{c}}\ &t \label{wc0}\\
\text{s.t. } & \max_{i\in[N]} \pmb{c}^i \pmb{x}(\pmb{c}) \le \pmb{c}\pmb{x}(\pmb{c}) \label{wc1}\\
& \pmb{c}\pmb{x}^* \le t \max_{i\in[N]} \pmb{c}^i \pmb{x}^* \label{wc2}
\end{align}
By substituting $\pmb{c}' := \pmb{c}/t$, Problem~(\ref{wc0}--\ref{wc2}) becomes equivalent to Problem~(\ref{eq0}--\ref{eq2}). Hence, we can apply the same techniques to transform this into a conservative linear program~(\ref{neq0}--\ref{neq4}) as in the previous section. Note, however, that while $\hat{\pmb{c}}$ is a feasible solution for this problem, this may not be the case for $\overline{\pmb{c}}$.

Related to the \textsc{MinMax} approach is \textsc{MinMax Regret}, where objective values are normalized by the optimal objective value in each scenario, i.e., 
\[ \min_{\pmb{x}\in\X} \max_{i\in[N]} \left( \pmb{c}^i\pmb{x} - \pmb{c}^i\pmb{x}(\pmb{c}^i) \right)  \tag{\textsc{MinMax Regret}} \]
The following result is also from \cite{Aissi2009}.

\begin{theorem}\label{th-n3}
The midpoint algorithm is an $N$-approximation for \textsc{MinMax Regret}; this does not hold for the element-wise worst-case algorithm.
\end{theorem}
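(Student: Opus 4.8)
I would split the statement into the positive claim (midpoint) and the negative claim (element-wise worst case) and attack them separately. Throughout, let $opt_i := \pmb{c}^i\pmb{x}(\pmb{c}^i)$ be the nominal optimum in scenario $i$, let $R^i(\pmb{x}) := \pmb{c}^i\pmb{x} - opt_i \ge 0$ be the regret of $\pmb{x}\in\X$ in scenario $i$, let $OPT$ now denote the optimal \textsc{MinMax Regret} value, and let $\pmb{x}^*\in\X$ attain it. For the midpoint algorithm I would re-run the argument of Theorem~\ref{th-n1} but keep the $N$-term sum intact as long as possible: fixing $i\in[N]$, I would use the identity $\pmb{c}^i\hat{\pmb{x}} = N\hat{\pmb{c}}\hat{\pmb{x}} - \sum_{j\ne i}\pmb{c}^j\hat{\pmb{x}}$ rather than the crude bound $\pmb{c}^i\hat{\pmb{x}}\le N\hat{\pmb{c}}\hat{\pmb{x}}$, combine it with $N\hat{\pmb{c}}\hat{\pmb{x}} \le N\hat{\pmb{c}}\pmb{x}^* = \sum_{j\in[N]}\pmb{c}^j\pmb{x}^*$ (optimality of $\hat{\pmb{x}}$ for $\hat{\pmb{c}}$), and subtract $opt_i$, so that the $opt_j$ terms cancel pairwise:
\[ R^i(\hat{\pmb{x}}) \le \bigl(\pmb{c}^i\pmb{x}^* - opt_i\bigr) + \sum_{j\ne i}\bigl(\pmb{c}^j\pmb{x}^* - \pmb{c}^j\hat{\pmb{x}}\bigr) = R^i(\pmb{x}^*) + \sum_{j\ne i}\bigl(R^j(\pmb{x}^*) - R^j(\hat{\pmb{x}})\bigr). \]
Since $R^j(\pmb{x}^*)\le OPT$ and $R^j(\hat{\pmb{x}})\ge 0$ for every $j$, the right-hand side is at most $N\cdot OPT$; maximizing over $i$ finishes this direction.

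For the element-wise worst-case algorithm I would exhibit a selection instance on which $\overline{\pmb{x}}$ is arbitrarily far from optimal. Take two items, choose $p = 1$ of them, and two scenarios with $\pmb{c}^1 = (M,0)$ and $\pmb{c}^2 = (M, M+1)$ for a large parameter $M$. Then $opt_1 = 0$ (item~2) and $opt_2 = M$ (item~1), while $\overline{\pmb{c}} = (M, M+1)$, so the algorithm selects item~1; its regrets are $M$ (scenario~1) and $0$ (scenario~2), giving $\max_{i} R^i(\overline{\pmb{x}}) = M$. Item~2 has regrets $0$ and $1$, so $OPT \le 1$. Hence the ratio is at least $M$, which exceeds $N = 2$ for $M \ge 3$ and is unbounded as $M\to\infty$; padding with copies of a scenario extends this to any $N$. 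Thus $\overline{\pmb{x}}$ is not an $N$-approximation (indeed not a $c$-approximation for any constant $c$).

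I expect the genuine difficulty in the first part to be exactly the bookkeeping remark above: if one estimates $\pmb{c}^i\hat{\pmb{x}}\le N\hat{\pmb{c}}\hat{\pmb{x}}$ too early, an uncancelled term $\sum_{j}opt_j$ survives and one only proves the weaker bound $N\cdot OPT + \sum_j opt_j$; the point is to delay estimation so that each $opt_j$ is absorbed by the nonnegative regret $R^j(\hat{\pmb{x}})$ of the midpoint solution in scenario $j$. In the second part the obstacle is finding the instance at all: for subset-selection problems the element-wise worst-case solution is ``balanced'' and tends to behave well for regret, so one must engineer a scenario in which every feasible solution is expensive (here $opt_2 = M$) while the item that looks cheapest under $\overline{\pmb{c}}$ is optimal precisely in that expensive scenario, so that its large cost in the other scenario is not absorbed by a large $opt$.
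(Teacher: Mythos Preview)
Your argument is correct in both halves, but there is nothing in the paper to compare it against: Theorem~\ref{th-n3} is simply quoted from \cite{Aissi2009} without proof. So what you have written is a self-contained proof of a result the paper only cites.

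On the midpoint direction, your chain of inequalities is valid, but the ``delay the estimation'' bookkeeping you worry about is avoidable. Your displayed inequality
\[
R^i(\hat{\pmb{x}}) \le R^i(\pmb{x}^*) + \sum_{j\ne i}\bigl(R^j(\pmb{x}^*) - R^j(\hat{\pmb{x}})\bigr)
\]
rearranges precisely to $\sum_{j\in[N]} R^j(\hat{\pmb{x}}) \le \sum_{j\in[N]} R^j(\pmb{x}^*)$, and this is the one-line argument used in \cite{Aissi2009}: since the constants $opt_j$ do not depend on $\pmb{x}$, minimizing $\hat{\pmb{c}}\pmb{x}$ over $\X$ is the same as minimizing $\sum_{j} R^j(\pmb{x})$, whence
\[
\max_{i\in[N]} R^i(\hat{\pmb{x}}) \le \sum_{j\in[N]} R^j(\hat{\pmb{x}}) \le \sum_{j\in[N]} R^j(\pmb{x}^*) \le N\cdot OPT.
\]
So your detour through the identity $\pmb{c}^i\hat{\pmb{x}} = N\hat{\pmb{c}}\hat{\pmb{x}} - \sum_{j\ne i}\pmb{c}^j\hat{\pmb{x}}$ works but is not needed; the feared ``uncancelled $\sum_j opt_j$'' never appears if one phrases optimality of $\hat{\pmb{x}}$ directly in terms of total regret.

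Your counterexample for $\overline{\pmb{x}}$ is correct and in fact stronger than the statement requires, since it shows the ratio is unbounded rather than merely larger than $N$. The padding remark is also fine: duplicating any scenario leaves $\overline{\pmb{c}}$ and hence $\overline{\pmb{x}}$ unchanged, while $OPT$ and the regret of $\overline{\pmb{x}}$ are unaffected, so for any $N$ one can take $M>N$.
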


In combination with Theorems~\ref{th-n1} and \ref{th-n2}, this means that there are no known problem classes where the element-wise worst-case solution gives a better performance guarantee than the midpoint solution. The midpoint solution has also been found to be the best-known general approximation algorithm for interval uncertainty problems \cite{kasperski2006approximation}. For these reasons, the midpoint solution has seen more attention in the research literature than the element-wise worst-case approach. However, in the following we show that if the number of scenarios is large, we element-wise worst-case approach can perform better than the midpoint approach, i.e., not only the size of the uncertainty set plays a role for approximability, but also the problem dimension.

\begin{theorem}\label{maintheorem}
The element-wise worst-case algorithm is a $|X|$-approximation for \textsc{MinMax}, where 
$|X|=\max_{\pmb{x}\in\X} \sum_{j\in[n]} x_j$. 
\end{theorem}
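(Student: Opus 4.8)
The plan is to reuse the proof chain of Theorem~\ref{th-n2} essentially verbatim and only sharpen the single step that produces the factor $N$. Recall that proof establishes
\[ UB(\overline{\pmb{c}}) = \max_{i\in[N]} \pmb{c}^i \overline{\pmb{x}} \le \overline{\pmb{c}}\overline{\pmb{x}} \le \overline{\pmb{c}}\pmb{x}^* \le N\max_{i\in[N]} \pmb{c}^i\pmb{x}^* = N\cdot OPT, \]
where the first inequality uses $\pmb{c}^i \le \overline{\pmb{c}}$ componentwise together with $\overline{\pmb{x}}\ge\pmb{0}$, and the second uses optimality of $\overline{\pmb{x}}=\pmb{x}(\overline{\pmb{c}})$ for scenario $\overline{\pmb{c}}$. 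Both of these remain valid unchanged; the only place the bound of $N$ enters is the estimate $\overline{\pmb{c}}\pmb{x}^*\le N\cdot OPT$, which in the original argument comes from the crude componentwise bound $\overline{c}_j=\max_{i} c^i_j \le \sum_{i\in[N]} c^i_j$. The goal is to replace this with a bound of $|X|\cdot OPT$.

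To do so, I would expand $\overline{\pmb{c}}\pmb{x}^*$ along the support of $\pmb{x}^*$. Write $X^* = \{j\in[n]:x^*_j=1\}$, so $|X^*| = \sum_{j\in[n]} x^*_j \le |X|$ by definition of $|X|$. For each coordinate $j\in X^*$, pick a scenario $i(j)\in\arg\max_{i\in[N]} c^i_j$, so that $\overline{c}_j = c^{i(j)}_j$. Since $\pmb{c}^{i(j)}\ge\pmb{0}$, adding the remaining nonnegative coordinates of $\pmb{x}^*$ only increases the value, giving $c^{i(j)}_j \le \sum_{l\in X^*} c^{i(j)}_l = \pmb{c}^{i(j)}\pmb{x}^* \le \max_{i\in[N]}\pmb{c}^i\pmb{x}^* = OPT$. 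Summing over the at most $|X|$ coordinates of $X^*$ then yields
\[ \overline{\pmb{c}}\pmb{x}^* = \sum_{j\in X^*}\overline{c}_j = \sum_{j\in X^*} c^{i(j)}_j \le |X^*|\cdot OPT \le |X|\cdot OPT. \]

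Plugging this into the chain above gives $UB(\overline{\pmb{c}}) \le |X|\cdot OPT$, which is the claim. There is no genuinely hard step here; the only point requiring care is the consistent use of the nonnegativity assumption $\pmb{c}\ge\pmb{0}$ — it is needed both for step~(i) of the inherited chain and, crucially, for the new inequality $c^{i(j)}_j \le \pmb{c}^{i(j)}\pmb{x}^*$, and the refinement fails without it. One may additionally remark that since $|X|\le n$ always, this bound is never worse than $n$, and it improves on the $N$-bound of Theorem~\ref{th-n2} precisely when the feasible solutions are sparse relative to the number of scenarios.
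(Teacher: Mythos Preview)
Your proof is correct and follows essentially the same route as the paper's own argument. The paper's chain bounds $\overline{\pmb{c}}\pmb{x}^*=\sum_{j}\max_i c^i_j x^*_j$ by $|X|\cdot\max_{j}\max_i c^i_j x^*_j$, swaps the two maxima, and then uses nonnegativity to pass to $|X|\cdot\max_i\sum_j c^i_j x^*_j=|X|\cdot OPT$; your version makes the same estimate term-by-term via the scenario selectors $i(j)$, which is just a slightly more explicit packaging of the identical idea.
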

\begin{proof}
It holds that
\begin{align*}
&\max_{i\in[N]} \sum_{j\in[n]} c^i_j \overline{x}_j 
\le \sum_{j\in[n]} \overline{c}_j \overline{x}_j 
\le \sum_{j\in[n]} \overline{c}_j x^*_j 
= \sum_{j\in[n]} \max_{i\in[N]} c^i_j x^*_j \\
& \le |X| \cdot \max_{j\in[n]} \max_{i\in[N]} c^i_j x^*_j
= |X|\cdot \max_{i\in[N]} \max_{j\in[n]} c^i_j x^*_j 
\le |X| \cdot \max_{i\in[N]} \sum_{j\in[n]} c^i_j x^*_j = |X| \cdot OPT
\end{align*}
\end{proof}

Note that $|X| \le n$. The approximation guarantees from Theorems~\ref{th-n1} and \ref{th-n2} are tight, as the following two examples for robust shortest path problems demonstrate (see also \cite{Aissi2009}).

\begin{figure}[htbp]
\centering
\subfigure[Hard instance for the midpoint solution.\label{graph1}]{\includegraphics[width=.25\textwidth]{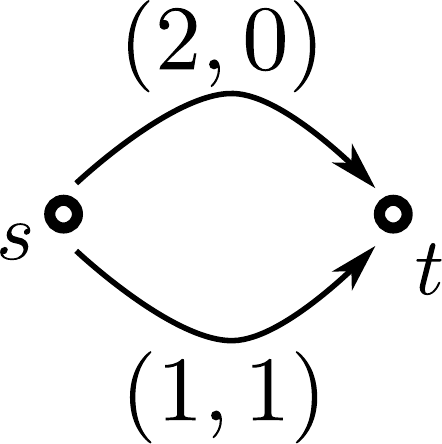}}\hspace*{2cm}
\subfigure[Hard instance for the element-wise worst-case solution.\label{graph2}]{\includegraphics[width=.35\textwidth]{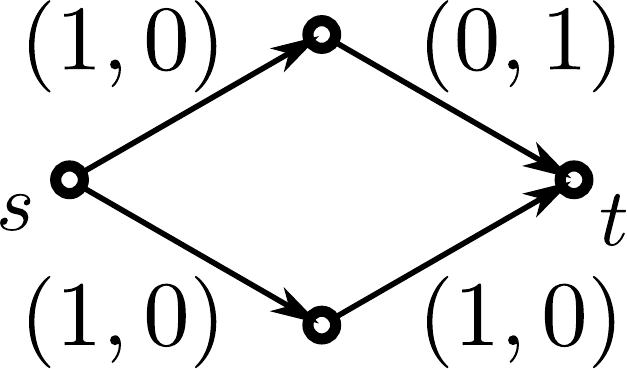}}
\caption{Example instances for robust shortest path with two scenarios.}\label{graphs}
\end{figure}

In Figure~\ref{graph1}, the midpoint solution cannot distinguish between the upper edge and the lower edge. Hence, in this case, the $N$-approximation guarantee is tight with $N=2$. In Figure~\ref{graph2}, the element-wise worst-case solution cannot differentiate between the upper and the lower path. This instance is an example where the $N$-approximation guarantee is tight for this approach.

Note that the instance from Figure~\ref{graph1} can be extended by using more scenarios, preserving that the midpoint solution is an $N$-approximation, without additional edges. This is not the case for the element-wise worst-case scenario in Figure~\ref{graph2}: To extend this instance to more scenarios, additional edges are required. This demonstrates that the midpoint solution is not a $|X|$-approximation, as shown for the element-wise worst-case approach.

\section{Experiments}

To test the quality of our LP-based scenario construction approach, we consider instances of the selection problem (see, e.g., \cite{kasperski2016robust}). Here, $\X = \{ \pmb{x}\in\{0,1\}^n : \sum_{j\in[n]} x_j = p\}$ for some integer parameter $p$. We generate item costs $c^i_j$ by sampling uniformly i.i.d. from $\{0,1,\ldots,100\}$. We use instances sizing from $n=10$, $p=3$ to $n=30$, $p=9$ and use $N\in\{2,5,10,50,100\}$. For each parameter combination, we generate 1000 instances and average results.

Table~\ref{apriori} shows the a-priori bounds for the midpoint approach when using our linear program~(\ref{neq0}--\ref{neq4}) for evaluation with $k=1$, $k=2$ and $k=3$ (Mid-1-Pre, Mid-2-Pre, and Mid-3-Pre, respectively). We compare this to the a-priori bounds that are found when also optimizing over the scenario $\pmb{c}$ for $k=1$, $k=2$ and $k=3$ (LP-1-Pre, LP-2-Pre, and LP-3-Pre, respectively). Note that overall, all guarantees are considerably smaller than $N$. Furthermore, our approach is able to improve the bound of the midpoint algorithm. On average, the guarantee that the midpoint approach gives is more than 20\% larger than our guarantee.

\begin{table}
\centering
\begin{tabular}{rrr|rrr|rrr}
$n$ & $p$ & $N$ & Mid-1-Pre & Mid-2-Pre & Mid-3-Pre & LP-1-Pre & LP-2-Pre & LP-3-Pre \\
\hline
10 & 3 & 2 & 1.86 & 1.75 & 1.65 & 1.70 & 1.57 & 1.46 \\
10 & 3 & 5 & 2.41 & 2.09 & 1.90 & 1.83 & 1.67 & 1.54 \\
10 & 3 & 10 & 2.45 & 2.13 & 1.97 & 1.79 & 1.65 & 1.53 \\
10 & 3 & 50 & 2.26 & 2.10 & 2.00 & 1.59 & 1.53 & 1.46 \\
10 & 3 & 100 & 2.18 & 2.08 & 2.00 & 1.52 & 1.48 & 1.43 \\
20 & 6 & 2 & 1.93 & 1.86 & 1.80 & 1.84 & 1.76 & 1.70 \\
20 & 6 & 5 & 2.66 & 2.32 & 2.14 & 2.09 & 1.94 & 1.82 \\
20 & 6 & 10 & 2.63 & 2.32 & 2.16 & 2.01 & 1.89 & 1.80 \\
20 & 6 & 50 & 2.32 & 2.18 & 2.09 & 1.77 & 1.73 & 1.69 \\
20 & 6 & 100 & 2.23 & 2.13 & 2.06 & 1.70 & 1.67 & 1.64 \\
30 & 9 & 2 & 1.96 & 1.92 & 1.87 & 1.90 & 1.84 & 1.79 \\
30 & 9 & 5 & 2.78 & 2.45 & 2.27 & 2.24 & 2.08 & 1.97 \\
30 & 9 & 10 & 2.73 & 2.42 & 2.26 & 2.13 & 2.03 & 1.94 \\
30 & 9 & 50 & 2.36 & 2.22 & 2.14 & 1.87 & 1.83 & 1.79 \\
30 & 9 & 100 & 2.26 & 2.16 & 2.10 & 1.79 & 1.77 & 1.74
\end{tabular}
\caption{Average a-priori bounds.}\label{apriori}
\end{table}

We contrast the a-priori bounds with a-posteriori bounds in Table~\ref{aposteriori}, i.e., we calculate the solutions $\pmb{x}(\pmb{c})$ for the respective scenarios $\pmb{c}$ and the resulting ratio of upper and lower bound. On average, the bound provided by the midpoint solution is around $17\%$ larger than the bound provided by our approach with $k=2$ or $k=3$. The max-min approach (denoted by MM) performs slightly better than our approach (Mid-Post is on average $19\%$ larger than MM-Post), but this comes without an a-priori guarantee, at the cost of higher computational effort, and it is not always possible to compute as explained in Section~\ref{mainsec}.

\begin{table}
\centering
\begin{tabular}{rrr|r|rrr|r}
$n$ & $p$ & $N$ & Mid-Post & LP-1-Post & LP-2-Post & LP-3-Post & MM-Post \\
\hline
10 & 3 & 2 & 1.30 & 1.24 & 1.22 & 1.21 & 1.24 \\
10 & 3 & 5 & 1.57 & 1.35 & 1.30 & 1.32 & 1.29 \\
10 & 3 & 10 & 1.66 & 1.39 & 1.34 & 1.36 & 1.34 \\
10 & 3 & 50 & 1.82 & 1.37 & 1.36 & 1.38 & 1.37 \\
10 & 3 & 100 & 1.85 & 1.35 & 1.35 & 1.36 & 1.35 \\
20 & 6 & 2 & 1.21 & 1.18 & 1.17 & 1.16 & 1.14 \\
20 & 6 & 5 & 1.40 & 1.30 & 1.26 & 1.24 & 1.19 \\
20 & 6 & 10 & 1.47 & 1.33 & 1.28 & 1.28 & 1.24 \\
20 & 6 & 50 & 1.59 & 1.34 & 1.31 & 1.32 & 1.32 \\
20 & 6 & 100 & 1.63 & 1.33 & 1.31 & 1.32 & 1.32 \\
30 & 9 & 2 & 1.17 & 1.16 & 1.15 & 1.14 & 1.10 \\
30 & 9 & 5 & 1.32 & 1.26 & 1.21 & 1.20 & 1.14 \\
30 & 9 & 10 & 1.38 & 1.30 & 1.26 & 1.25 & 1.19 \\
30 & 9 & 50 & 1.48 & 1.30 & 1.28 & 1.28 & 1.28 \\
30 & 9 & 100 & 1.52 & 1.30 & 1.28 & 1.28 & 1.30
\end{tabular}
\caption{Average a-posteriori bounds.}\label{aposteriori}
\end{table}

Finally, we show more details on the a-posteriori bounds by providing both the upper and lower bounds in Tables~\ref{ubs} and \ref{lbs}. We find that our approach gives both better upper, and better lower bounds than the midpoint approach. While the MaxMin approach provides the best lower bounds, its upper bounds are often worse than for the midpoint solution.

\begin{table}
\centering
\begin{tabular}{rrr|r|r|rrr|r}
$n$ & $p$ & $N$ & OPT & Mid-UB & LP-1-UB & LP-2-UB & LP-3-UB & MM-UB \\
\hline
10 & 3 & 2 & 96.6 & 108.0 & 105.3 & 103.8 & 103.3 & 110.3 \\
10 & 3 & 5 & 142.9 & 169.5 & 162.8 & 158.0 & 158.8 & 165.9 \\
10 & 3 & 10 & 170.4 & 199.3 & 198.2 & 189.0 & 189.1 & 202.0 \\
10 & 3 & 50 & 219.0 & 248.3 & 249.8 & 241.9 & 239.9 & 254.1 \\
10 & 3 & 100 & 234.8 & 260.4 & 262.6 & 256.3 & 253.6 & 265.4 \\
20 & 6 & 2 & 172.1 & 193.7 & 190.6 & 188.9 & 187.5 & 189.1 \\
20 & 6 & 5 & 247.6 & 296.6 & 289.4 & 282.2 & 280.2 & 276.9 \\
20 & 6 & 10 & 292.7 & 351.0 & 346.2 & 334.6 & 332.3 & 337.2 \\
20 & 6 & 50 & 369.4 & 431.8 & 438.8 & 424.6 & 420.6 & 440.3 \\
20 & 6 & 100 & 395.6 & 457.7 & 461.2 & 450.8 & 446.6 & 464.5 \\
30 & 9 & 2 & 247.2 & 276.1 & 273.9 & 273.0 & 271.9 & 266.0 \\
30 & 9 & 5 & 351.2 & 416.2 & 408.6 & 398.3 & 395.9 & 384.1 \\
30 & 9 & 10 & 409.2 & 491.1 & 483.3 & 471.7 & 467.7 & 461.6 \\
30 & 9 & 50 & 513.1 & 605.5 & 610.3 & 592.4 & 588.6 & 607.0 \\
30 & 9 & 100 & 547.5 & 638.3 & 645.1 & 628.5 & 623.6 & 648.3
\end{tabular}
\caption{Average upper bounds.}\label{ubs}
\end{table}

\begin{table}
\centering
\begin{tabular}{rrr|r|r|rrr|r}
$n$ & $p$ & $N$ & OPT & Mid-LB & LP-1-LB & LP-2-LB & LP-3-LB & MM-LB \\
\hline
10 & 3 & 2 & 96.6 & 82.9 & 85.1 & 85.8 & 86.1 & 90.1 \\
10 & 3 & 5 & 142.9 & 108.3 & 121.9 & 122.1 & 121.1 & 129.4 \\
10 & 3 & 10 & 170.4 & 120.3 & 143.6 & 141.6 & 139.6 & 151.2 \\
10 & 3 & 50 & 219.0 & 136.7 & 183.0 & 178.2 & 174.4 & 186.2 \\
10 & 3 & 100 & 234.8 & 140.8 & 194.8 & 190.6 & 186.2 & 196.6 \\
20 & 6 & 2 & 172.1 & 160.5 & 161.6 & 162.1 & 162.4 & 166.2 \\
20 & 6 & 5 & 247.6 & 212.9 & 223.7 & 225.2 & 225.6 & 234.1 \\
20 & 6 & 10 & 292.7 & 238.5 & 260.9 & 261.2 & 260.3 & 272.9 \\
20 & 6 & 50 & 369.4 & 272.5 & 327.8 & 323.6 & 319.9 & 333.1 \\
20 & 6 & 100 & 395.6 & 280.8 & 348.1 & 343.8 & 339.7 & 351.2 \\
30 & 9 & 2 & 247.2 & 236.3 & 237.2 & 237.5 & 237.8 & 242.1 \\
30 & 9 & 5 & 351.2 & 316.3 & 325.0 & 328.3 & 328.9 & 337.9 \\
30 & 9 & 10 & 409.2 & 355.1 & 373.2 & 375.6 & 375.8 & 389.2 \\
30 & 9 & 50 & 513.1 & 408.0 & 467.8 & 464.3 & 460.7 & 475.3 \\
30 & 9 & 100 & 547.5 & 420.4 & 495.9 & 491.5 & 487.4 & 500.1
\end{tabular}
\caption{Average lower bounds.}\label{lbs}
\end{table}

\section{Conclusion}
Most robust combinatorial optimization problems are hard, which has lead to the development of general approximation algorithms. The two best-known such approaches are the midpoint method and the element-wise worst-case approach. Both rely on creating a single scenario that is representative for the whole uncertainty set. By reconsidering the respective proofs that both are $N$-approximation algorithms, we find an optimization problem to construct a representative scenario that results in an approximation which is at least as good as for the previous two scenarios.

In computational experiments using the selection problem, we test this approach numerically. We find that the midpoint method gives a guarantee that is about 20\% larger than ours, while we only need to solve a simple linear program to construct the representative scenario. The improved a-priori guarantee is also reflected in an improved a-posteriori guarantee, with our approach providing both better upper and lower bounds than before. This smaller gap could potentially be used within branch-and-bound algorithms for a more efficient search for an optimal solution.

\end{document}